\documentclass[12pt]{article}

\usepackage[margin=1in]{geometry}
\usepackage{amsmath}
\usepackage{amssymb}
\usepackage{amsthm}
\usepackage{graphicx}
\usepackage{algorithm}
\usepackage{algorithmic}

\usepackage{bbm,bm}
\usepackage{authblk}
\usepackage{cancel}
\usepackage[round,sort]{natbib}
\usepackage{multirow}
\usepackage{hyperref}

\def\PP{\mathbb{P}}
\def\RR{\mathbb{R}}
\def\ZZ{\mathbb{Z}}

\DeclareMathOperator{\diag}{diag}
\DeclareMathOperator{\sign}{sign}
\DeclareMathOperator{\Adj}{Adj}

\def\U{{\cal U}}

\def\N{{\cal N}}

\newtheorem{theorem}{Theorem}

\begin{document}

\title{Uniform Sampling from\\High-dimensional Spectral Norm Balls}
\author{Michael R. Metel\thanks{michaelrmetel@gmail.com}}
\maketitle	

\begin{abstract}
\noindent Motivated by an application in machine learning optimization, this paper focuses on the challenges of sampling a matrix uniformly from the unit spectral norm ball. It is proven that all singular values of sampled matrices converge to 1 almost surely as the matrix dimensions increase. This result provides the theoretical justification for a proposed simple sampling method applicable for large dimension sizes matching matrices found in modern large language models. Experimental results demonstrate both the convergence of the singular values, as well as the exact and proposed approximate sampling methods.
\end{abstract}

\section{Introduction}
\label{intro}
This paper studies the uniform sampling of matrices $\bm{\Delta}\in\RR^{n\times m}$ from the unit spectral norm ball,
\begin{alignat}{6}
B:=\{\bm{\Delta}\in \RR^{n\times m}: \sigma_{max}(\bm{\Delta})\leq 1\},\nonumber
\end{alignat}
where $\sigma_{max}(\bm{\Delta})$ denotes the largest singular value of $\bm{\Delta}$. 
A sample from this distribution will be denoted as $\hat{\bm{\Delta}}\sim\U(B)$. Given that a sample from the spectral norm ball of radius $\rho>0$ can be generated as $\rho\hat{\bm{\Delta}}$, the focus on $\rho=1$ is without loss of generality. After a possible transpose, it will also be assumed that $m\geq n$.  

A polynomial-time recipe to sample from this distribution can be found across two papers \citep{calafiore2000,calafiore2002}, which was motivated by probabilistic robustness techniques for control systems. The idea is to consider a normalized singular value decomposition of $\bm{\Delta}$,  
\begin{alignat}{6}
	&\bm{\Delta}=\bm{U}\bm{\Sigma}\bm{V}^T,\nonumber
\end{alignat}
where $\bm{U}\in\RR^{n,n}$ and $\bm{V}\in\RR^{m,n}$ have orthonormal columns with $V_{1,j}>0$ (almost surely) for $j=1,...,n$, and $\bm{\Sigma}=\diag(\sigma_1,..,\sigma_n)$ for singular values $\sigma_1\geq\sigma_2\geq ...\geq \sigma_n$. It is proven \citep[Theorem 2]{calafiore2000} that to uniformly sample over $B$, $\bm{U}$, $\bm{\Sigma}$, and $\bm{V}$ can be sampled independently, with $\bm{U}$ and $\bm{V}$ sampled uniformly over the orthogonal and normalized semi-orthogonal matrices, respectively, which can be done using samples from the standard Normal distribution and QR decomposition as shown in Algorithm \ref{alg:UV}.
\begin{algorithm}[H]
	\caption{Uniformly sampling $\bm{U}$ and $\bm{V}$}
	\label{alg:UV}
	\begin{algorithmic}			
		\STATE Sample $\hat{\bm{U}}\in\RR^{n,n}$ and $\hat{\bm{V}}\in\RR^{m,m}$: $\hat{U}_{j,k}\sim \N(0,1)$ and $\hat{V}_{j,k}\sim \N(0,1)$ $\forall j,k$
        \STATE $[\bm{Q}(\hat{\bm{U}}),\bm{R}(\hat{\bm{U}})]=\text{QR}(\hat{\bm{U}})$ 
        \STATE $[\bm{Q}(\hat{\bm{V}}),\bm{R}(\hat{\bm{V}})]=\text{QR}(\hat{\bm{V}})$
	    \STATE $\hat{\bm{U}}=\bm{Q}(\hat{\bm{U}})\diag(\sign(R(\hat{\bm{U}})_{11},...,\sign(R(\hat{\bm{U}})_{nn})$
        \STATE $\hat{\bm{V}}=\bm{Q}(\hat{\bm{V}})[:,1:n]$ 
        \STATE $\hat{\bm{V}}=\hat{\bm{V}}\diag(\sign(\hat{V}_{11}),...,\sign(\hat{V}_{1n}))$
		\STATE {\bfseries Output:} $\hat{\bm{U}}$,$\hat{\bm{V}}$
	\end{algorithmic}
\end{algorithm}

\noindent In order to sample $\bm{\Sigma}$ requires significantly more effort. Singular values are proposed to be sampled iteratively following the conditional distribution method \citep[Chapter 11]{devroye1986}. The algorithm presented in \citep[page 1269]{calafiore2002} samples singular values using exact conditional densities which contain both constants and exponents which result in over and underflows for modest problem sizes. As the motivation is to sample matrices at the scale of modern large language models, e.g., the largest linear layer weight matrix of the 32 billion-parameter Qwen 3 model \citep{qwen3} is $n\times m = 5,120\times 25,600$, the first step, detailed in Section \ref{sva}, was to simplify the algorithm by using functions which are only proportional to the exact conditional densities.

After failing to scale up the algorithm directly, the distribution of the singular values as the dimensions of $\bm{\Delta}$ increase is studied in Section \ref{theo}. The main contribution presented as Theorem \ref{theorem1} is that all singular values converge to 1 almost surely as $n\rightarrow \infty$, and in Theorem \ref{theorem2}, it is proven that all singular values converge to 1 almost surely as only $m\rightarrow \infty$ for a fixed value of $n$. 

These results present a viable approximate sampling algorithm consisting of setting $\bm{\Sigma}=\bm{I}$, and simply using Algorithm \ref{alg:UV} to sample from $\U(B)$. Section \ref{Num_exp} contains further implementation details of the exact sampling algorithm, and empirical results demonstrating the convergence of the exact and proposed approximate sampling methods. 

There appears to have been no attempts to develop a practical approach to sample from $\U(B)$ at the scale of real-world problems. The software package RACT \citep{tremba2008} supports sampling from $\U(B)$, but a warning stating ``This can take a LONG time..." will be given when $\min(m,n)>6$. The experiments presented in \citep{calafiore2002} are also restricted to problem sizes upper bounded by $\max(n,m)\leq 4$. This section ends with an application that motivated this work, requiring an efficient sampling approach for high-dimensional matrices. 

\subsection{Neural Network Training with Random Weight Perturbation}
\label{application}

Gaussian, uniform $l_2$, and $l_\infty$-norm ball perturbations have been applied to loss function decision variables when computing stochastic gradients for first-order optimization methods \citep[Appendix E]{duchi2012}. This has a smoothing effect, resulting in non-differentiable Lipchitz continuous functions becoming smooth in expectation, enabling provably convergent optimization algorithms. From a neural network perspective, random weight perturbation pushes convergence to flatter local minima, improving a trained model's ability to generalize \citep{bisla2022}. 
These forms of perturbation implicitly view a model's weights as a vector $\bm{w}\in\RR^N$. Considering that neural networks contain several layers, each of which consisting of matrices, a line of research \citep{bernstein2025} has focused on incorporating this structure into the design of optimization methods. Letting each weight matrix $\bm{W}^i$ live in its own vector space $V^i$ with norm $\|\cdot\|_i$, the neural network is contained in the product space $\Pi_iV^i$, with norm $\max\limits_ia_i\|\bm{W}^i\|_i$ for per-matrix weights $a_i>0$. Assuming that the input $x^i\in\RR^m$ and output $y^i\in\RR^n$ of each linear layer lie in Euclidean space, $\|\cdot\|_i$ is set to the induced matrix norm of $\bm{W}^i$: the spectral norm. Given an efficient method to sample $\hat{\bm{\Delta}}^i\sim\U(B)$, $\bm{W}^i$ can then be perturbed by $\rho_i\hat{\bm{\Delta}}^i$ for per-matrix radii $\rho_i$, enabling the extension of random weight perturbation to architecture-aware optimization algorithms.

\section{Singular Value Sampling Algorithm}
\label{sva}

This section describes an implementation of the algorithm described in \citep[page 1269]{calafiore2002} to sample iteratively the singular values $\sigma_1\geq \sigma_2\geq...\geq \sigma_n$. While the differences between the described implementation and the published method are described, for a full derivation of this algorithm, it is recommended to consult the original publication. 

\subsection{Definitions}
\label{notdef}
Let $v:=\frac{1}{2}(m-n-1)$, $\beta:=\frac{n}{2}(n-1)$,  
    $\bm{J}:=\begin{bmatrix}
    0 & 1\\    
    -1 & 0\\
    \end{bmatrix}$, and $\bm{1}_n:=[1,1,...,1]^T\in\RR^n$.
Let $\bm{S}\in \RR^{n\times n}$ be a skew-symmetric matrix, where $S_{ij}:=\frac{i-j}{(i+v)(j+v)(i+j+2v)}$, let $\bm{F}\in\RR^n$ be a vector defined as $F_j:=\frac{1}{j+v}$, and let $\bm{X}: \RR\rightarrow \RR^n$ be defined as $X_j(x):=x^{j-1}$. When sampling $\sigma_i$, the algorithm depends on the parity of $n$ and $i$, so the resulting four cases will need to be considered. The functions defined below are used for the recursive definitions of the marginal densities of the singular values.\\

\noindent\underline{Even number of rows $n$}: 
    Let 
    $$\textbf{Z}_0(x):=\diag(x^{\beta},x^{\beta-1},...,x^{\beta-n-1})\bm{Q}_0^{-1}\diag(x^{\beta},x^{\beta-1},...,x^{\beta-n-1}),$$ or element-wise, 
    $(Z_0(x))_{ij}=x^{2\beta-i-j+2}(Q_0^{-1})_{ij}$,     
    where $\bm{Q}_0:=\bm{S}$, and let $\bm{h}_e(x):=\bm{X}(x)$. The functions $\bm{Z}_0$ and $\bm{h}_e$ are used to generate even-indexed samples $\hat{\sigma}_i$. Let 
    $$\bm{Z}_1(x):=\diag(x^{\beta},x^{\beta-1},...,x^{\beta-n-1},x^{\beta},x^{\beta})\bm{Q}_1^{-1}\diag(x^{\beta},x^{\beta-1},...,x^{\beta-n-1},x^{\beta},x^{\beta}),$$ 
    or
    \begin{alignat}{6}
    (Z_1(x))_{ij}:=\begin{cases}
    x^{2\beta-i-j+2}(Q_1^{-1})_{ij} & \text{if } i,j\in[1,2,...,n] \\
    x^{2\beta-i+1}(Q_1^{-1})_{ij} & \text{if } i\in [1,2,...,n] \text{ and } j\in[n+1,n+2] \\
    x^{2\beta-j+1}(Q_1^{-1})_{ij} & \text{if } j\in [1,2,...,n] \text{ and } i\in[n+1,n+2] \\
    x^{2\beta}(Q_1^{-1})_{ij} & \text{if } i,j\in [n+1,n+2],
    \end{cases}\label{Z_1_even}
    \end{alignat} 
    where 
    $$\bm{Q}_1:=\begin{bmatrix}
    \bm{S} & \bm{F} & \bm{1}_n\\                      
    -\bm{F}^T & 0 & 0\\
    -\bm{1}_n^T & 0 & 0\\
    \end{bmatrix},$$
    and let $\bm{h}_o(x):=[\bm{X}(x);0;0]$, which are used to generate odd-indexed samples $\hat{\sigma}_i$.\\

\noindent\underline{Odd number of rows $n$}: 
    Let 
    $$\bm{Z}_0(x):=\diag(x^{\beta},x^{\beta-1},...,x^{\beta-n-1},x^{\beta})\bm{Q}_0^{-1}\diag(x^{\beta},x^{\beta-1},...,x^{\beta-n-1},x^{\beta}),$$     
    where 
    $$\bm{Q}_0:=\begin{bmatrix}
    \bm{S} & \bm{F}\\
    -\bm{F}^T & 0\\    
    \end{bmatrix},$$  
    $$\bm{Z}_1(x):=\diag(x^{\beta},x^{\beta-1},...,x^{\beta-n-1},x^{\beta})\bm{Q}_1^{-1}\diag(x^{\beta},x^{\beta-1},...,x^{\beta-n-1},x^{\beta}),$$    
    where 
    $$\bm{Q}_1:=\begin{bmatrix}
    \bm{S} & \bm{1}_n\\    
    -\bm{1}_n^T & 0\\
    \end{bmatrix},$$
    and $\bm{h}_e(x):=\bm{h}_o(x):=[\bm{X}(x);0]$. The exponent of $x$ in $(Z_0(x))_{ij}$ and $(Z_1(x))_{ij}$ for all $1\leq i,j\leq n+1$ is identical to \eqref{Z_1_even}.

In the original algorithm, $\bm{Z}_0$ and $\bm{Z}_1$ contain the matrices $\Adj(\bm{Q}_0)$ and $\Adj(\bm{Q}_1)$ instead of $\bm{Q}_0^{-1}$ and $\bm{Q}_1^{-1}$, with our implementation not including the constants $\det(\bm{Q}_0)$ and $\det(\bm{Q}_1)$, which both underflow when $n\geq 28$ using 64-bit floating point arithmetic\footnote{This was tested using numpy.linalg.det in the same Python environment described in Section \ref{Num_exp}.}. Let the scalar function 
$$p(x):=x^{\beta}$$
also be defined, which will replace the scalar functions $p_0(x)=x^{\beta}\sqrt{\det(\bm{Q}_0)}$ and $p_1(x)=x^{\beta}\sqrt{\det(\bm{Q}_1)}$ used in the original algorithm. The next section verifies that these simplifications preserve the proportionality to the conditional probability distribution of all $\sigma_i$. 

\subsection{Algorithm Implementation}
\label{algorithm}
With the given definitions, the basic algorithm is shown in Algorithm \ref{alg:sigma}.\\ 

\noindent Line \ref{ln:1}: The probability density function of $\sigma_1$ is equal to $f_1(\sigma_1)=\frac{K}{2^{n-1}}\sigma_1^{2(v+1)(n-1)}p_1(\sigma_1^2)\sigma_1^{m-n}$, where $K$ is a normalization constant \citep[Eq. 5]{calafiore2002}. Removing constants and simplifying the exponents results in the given function proportional to $f_1$.\\

\noindent Lines \ref{ln:3}, \ref{ln:5}, \& \ref{ln:52}: These are exponents that $\sigma_i$ will be taken to in its conditional density. Simplifying, $e_1(i)=(m-n)(n-i+1)+n-i$, and by considering the case $m=n$, $e_1(i)\geq n-i$. When $i$ is even, $e_2(i)=-(i-1)n(n-1)$, and when odd, $e_2(i)=-(i-2)n(n-1)$. The exponent $e_1(i)+e_2(i)$ is decreasing in $i$: For even $i$, $e_1(i+1)+e_2(i+1)-(e_1(i)+e_2(i))=-(m-n)-1$, and for odd $i$, $e_1(i+1)+e_2(i+1)-(e_1(i)+e_2(i))=-(m-n)-1-2n(n-1)$.\\

\noindent Line \ref{rec:2}: This is the recursive calculation of $\bm{Z}_i(x)$ as given in \citep[page 1269]{calafiore2002}. Defining $\tilde{\bm{Z}}_0(x):=\det(\bm{Q}_0)\bm{Z}_0(x)$ and $\tilde{\bm{Z}}_1(x):=\det(\bm{Q}_1)\bm{Z}_1(x)$ as the original definitions of $\bm{Z}_0$ and $\bm{Z}_1$, and using the same recursion to compute $\tilde{\bm{Z}}_i$,   $\tilde{\bm{Z}}_i(x)=\det(\bm{Q}_0)^{2^\frac{i}{2}} \bm{Z}_i(x)$ when $i$ is even, and $\tilde{\bm{Z}}_i(x)=\det(\bm{Q}_1)^{2^\frac{i-1}{2}} \bm{Z}_i(x)$ when $i$ is odd.\\

\noindent Lines \ref{sam:1} \& \ref{sam:2}: The marginal density of $\{\sigma_j\}_{j=1}^i$ is equal to 
$$\frac{K}{2^{n-i}}p_i(\sigma_i^2)\sigma_i^{2(v+1)(n-i)+m-n}\Pi_{j=1}^{i-1}\sigma_j^{m-n},$$
where $p_i(x)=p_0(x)^{-(i-1)}|\bm{h}_e(\sigma_{i-1}^2)^T\tilde{\bm{Z}}_{i-2}(x)\bm{h}_e(x)|$ when $i$ is even and 

\noindent$p_i(x)=p_1(x)^{-(i-2)}|\bm{h}_o(\sigma_{i-2}^2)^T\tilde{\bm{Z}}_{i-2}(x)\bm{h}_o(\sigma_{i-1}^2)|$ when $i$ is odd. Given that $\{\sigma_j=\hat{\sigma}_j\}_{j=1}^{i-1}$ are known, the required conditional density of $\sigma_i$ is proportional to this marginal density. By removing the constant $\frac{K}{2^{n-i}}\Pi_{j=1}^{i-1}\sigma_j^{m-n}$, and recalling that $p_0(x) \text{ \& }p_1(x)\propto x^{\beta}$ and $\bm{Z}_{i-2}\propto\tilde{\bm{Z}}_{i-2}$, the functions on Lines \ref{sam:1} \& \ref{sam:2} are proportional to $f_i(\sigma_i|\sigma_1=\hat{\sigma}_1,...,\sigma_{i-1}=\hat{\sigma}_{i-1})$. 

\begin{algorithm}
	\caption{Sampling singular values $\sigma_1\geq \sigma_2\geq...\geq \sigma_n$}
	\label{alg:sigma}
	\begin{algorithmic}[1]
		\STATE Generate sample $\hat{\sigma}_1$ from density $f_1(\sigma_1)\propto\sigma_1^{mn-1}$\label{ln:1}  
        \FOR{$i=2,...,n$}
        \STATE {$e_1(i)=2(v+1)(n-i)+m-n$}\label{ln:3}   
        \IF{$i\mod 2 = 0$} 
        \STATE {$e_2(i)=-2(i-1)\beta$}\label{ln:5}   
        \STATE $d_i(x)=\bm{h}_e(\hat{\sigma}^2_{i-1})^T\bm{Z}_{i-2}(x)\bm{h}_e(x)$\label{d:1} 
        \STATE Generate sample $\hat{\sigma}_i$ from density $f_i(\sigma_i|\sigma_1=\hat{\sigma}_1,...,\sigma_{i-1}=\hat{\sigma}_{i-1})\propto\sigma_i^{e_1(i)+e_2(i)}|d_i(\sigma_i^2)|$\label{sam:1} 
        \STATE Update $d_i(x)=\bm{h}_e(\hat{\sigma}^2_{i-1})^T\bm{Z}_{i-2}(x)\bm{h}_e(\hat{\sigma}_i^2)$
        \STATE $\bm{H}(i)=[\bm{h}_e(\hat{\sigma}^2_{i-1}),\bm{h}_e(\hat{\sigma}^2_{i})]\bm{J}[\bm{h}_e(\hat{\sigma}^2_{i-1}),\bm{h}_e(\hat{\sigma}^2_{i})]^T$      
        \ELSE       
        \STATE {$e_2(i)=-2(i-2)\beta$}\label{ln:52}    
        \STATE $d_i(x)=\bm{h}_o(\hat{\sigma}^2_{i-2})^T\bm{Z}_{i-2}(x)\bm{h}_o(\hat{\sigma}_{i-1}^2)$\label{d:2}
        \STATE Generate sample $\hat{\sigma}_i$ from density $f_i(\sigma_i|\sigma_1=\hat{\sigma}_1,...,\sigma_{i-1}=\hat{\sigma}_{i-1})\propto\sigma_i^{e_1(i)+e_2(i)}|d_i(\sigma_i^2)|$\label{sam:2}      
        \STATE $\bm{H}(i)=[\bm{h}_o(\hat{\sigma}^2_{i-2}),\bm{h}_o(\hat{\sigma}^2_{i-1})]\bm{J}[\bm{h}_o(\hat{\sigma}^2_{i-2}),\bm{h}_o(\hat{\sigma}^2_{i-1})]^T$           
        \ENDIF        
        \STATE $\bm{Z}_i(x)=d_i(x)\bm{Z}_{i-2}(x)+\bm{Z}_{i-2}(x)\bm{H}(i)\bm{Z}_{i-2}(x)$\label{rec:2}        
        \ENDFOR
	\end{algorithmic}
\end{algorithm}

\section{Convergence of $\{\sigma_i\}$ in High Dimension}
\label{theo}

The main result of this work is that as $n$ increases, all singular values converge to $1$ almost surely, where we recall that $\sigma_1\geq \sigma_2\geq ...\geq \sigma_n$ by definition.
\begin{theorem}
\label{theorem1}
For matrices uniformly distributed in the unit spectral norm ball, $\bm{\Delta}\sim\U(B)$,  $\lim\limits_{n\rightarrow \infty}\sigma_n=1$ almost surely.
\end{theorem}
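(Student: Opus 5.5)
The plan is to use the joint density of the singular values of $\bm{\Delta}\sim\U(B)$. Writing Lebesgue measure on $\RR^{n\times m}$ in SVD coordinates, with the Haar parts integrated out by \citep[Theorem 2]{calafiore2000}, gives
$$f(\sigma_1,\dots,\sigma_n)\propto \prod_{i=1}^n \sigma_i^{m-n}\prod_{i<j}|\sigma_i^2-\sigma_j^2|$$
on $1\ge\sigma_1\ge\dots\ge\sigma_n\ge 0$. This is consistent with $f_1\propto\sigma_1^{mn-1}$ in Line \ref{ln:1} of Algorithm \ref{alg:sigma}. Since $\sigma_n$ is the smallest singular value, it suffices to show $\PP(\sigma_n\le 1-\epsilon)$ is summable in $n$ for every fixed $\epsilon>0$. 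Borel--Cantelli then gives $\liminf\sigma_n\ge 1-\epsilon$ almost surely, and $\sigma_n\le 1$ holds trivially. The result is read along the sequence of dimensions $n$, with $m=m(n)\ge n$ arbitrary; no coupling across $n$ is needed, because Borel--Cantelli only requires summability of the marginal probabilities.

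\textbf{Main estimate.} The smallest singular value can be bounded by a volume argument. Let $t=1-\epsilon$. The event $\{\sigma_n\le t\}$ is contained in the set of $\bm{\Delta}\in B$ having a unit vector $u\in\RR^n$ with $\|u^T\bm{\Delta}\|\le t$. I would bound $\PP(\sigma_n\le t)=\mathrm{vol}(\{\sigma_n\le t\}\cap B)/\mathrm{vol}(B)$ by a comparison map. Given $\bm{\Delta}$ with SVD $\bm{U}\bm{\Sigma}\bm{V}^T$, replace $\sigma_n$ by any $s\in[t,1]$. In density terms, I would compare the conditional law of $\sigma_n$ given $\sigma_1,\dots,\sigma_{n-1}$, which is proportional to
$$\sigma_n^{m-n}\prod_{i<n}(\sigma_i^2-\sigma_n^2)$$
on $[0,\sigma_{n-1}]$.

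This one-variable conditional is not enough on its own, since $\sigma_{n-1}$ could itself be small. A cleaner route uses the radial structure. Scaling $\bm{\Delta}\mapsto r\bm{\Delta}$ shows that $\sigma_1$ has density $\propto\sigma_1^{mn-1}$, so $\PP(\sigma_1\le 1-\epsilon)=(1-\epsilon)^{mn}$, which is summable. The remaining difficulty is showing that all singular values cluster at the top.

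\textbf{Key step.} I would extend the scaling trick to the empirical measure by bounding ratios of the density under the map $\sigma_i\mapsto\sigma_i$ for $i<k$ and $\sigma_i\mapsto \lambda\sigma_i$ for $i\ge k$. More directly, I would compute the log-density as an energy:
$$E=(m-n)\sum_i\log\sigma_i+\sum_{i<j}\log|\sigma_i^2-\sigma_j^2|.$$
Take a configuration with $\sigma_n\le t$ and move $\sigma_n$ to a point $s$ in a window near $1$ chosen to avoid the other $\sigma_i$. The change in $E$ is at least $(n-1)\log\frac{1-\dots}{\dots}$, and a careful choice of window makes it grow linearly in $n$. Integrating over the window then gives
$$\PP(\sigma_n\le t)\le C\,n\,e^{-cn}$$
for some $c=c(\epsilon)>0$.

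\textbf{Main obstacle.} Making the pairwise repulsion term favour moving $\sigma_n$ upward is the delicate part. The factor $\prod_i(\sigma_i^2-s^2)$ can be small when $s$ is near the other $\sigma_i$, which cluster near $1$. A cheaper route is to work with the squared values $x_i=\sigma_i^2$, which form a Jacobi ensemble with parameters $a=\frac{m-n-1}{2}$ and $b=0$. One then invokes the known large deviation principle for the extreme eigenvalue of Jacobi ensembles. The equilibrium measure degenerates to $\delta_1$ because the $b=0$ edge at $x=1$ is not pinned while the hard constraint sits there and the volume is concentrated. The smallest eigenvalue then deviates with probability $e^{-cn^2}$ or $e^{-cn}$. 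I expect rigorously justifying this degenerate equilibrium, namely that the support collapses to $\{1\}$, to be the hard part.
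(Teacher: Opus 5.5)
Your key step cannot be completed, and the obstruction is not technical. You allow $m=m(n)\ge n$ arbitrary, which is also how the paper reads the theorem (it uses $m=n$ as its worst case). In that generality the bound $\PP(\sigma_n\le 1-\epsilon)\le Cne^{-cn}$ already fails for $m=n$, and so does the statement itself.

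The conditional density you wrote down settles this. For $m=n$ it is proportional to $\prod_{j<n}(\sigma_j^2-\sigma_n^2)$ on $[0,\sigma_{n-1}]$, and every factor is positive and nonincreasing in $\sigma_n$ there. A nonincreasing density on $[0,L]$ puts mass at least $\frac12$ on $[0,L/2]$, so $\PP(\sigma_n\le\sigma_{n-1}/2\mid\sigma_1,\dots,\sigma_{n-1})\ge\frac12$. Hence $\PP(\sigma_n\le\frac12)\ge\frac12$ and $\mathbb{E}\,\sigma_n\le\frac12$ for every $n$. So $\sigma_n$ does not converge to $1$ even in probability, under any coupling across $n$.

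This is exactly where your energy step breaks. With $m=n$ there is no $(m-n)\log\sigma_n$ term to reward the move, and the repulsion term does not favour it: within $[0,\sigma_{n-1}]$, raising $\sigma_n$ strictly lowers $E$. Likewise, the degenerate equilibrium you single out as the hard part is false, not hard. For fixed $m-n$ the $x_i=\sigma_i^2$ form a Jacobi ensemble with fixed parameters. Its empirical measure converges to the arcsine law $\frac{dx}{\pi\sqrt{x(1-x)}}$ on $[0,1]$, not to $\delta_1$. Only the top of the spectrum approaches $1$, which is all your correct computation $\PP(\sigma_1\le1-\epsilon)=(1-\epsilon)^{mn}$ captures. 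Quantitatively, Aomoto's extension of Selberg's integral gives $\frac1n\mathbb{E}\|\bm{\Delta}\|_F^2=\frac{m}{m+n+1}$. Since $\sigma_n^2\le\frac1n\|\bm{\Delta}\|_F^2$, bounded convergence shows that $\sigma_n\to1$ almost surely forces $m/n\to\infty$.

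The paper's proof does not supply the missing step either. It uses the conditional densities of Algorithm~\ref{alg:sigma} rather than an energy or equilibrium argument. It writes $\tilde f_i=\sum_j a_{ij}\sigma_i^{t_{ij}}$ with all $a_{ij}>0$ and all $t_{ij}\ge c2^{(i-1)/2}n^2$, bounds each gap $\sigma_{i-1}-\sigma_i$ via \eqref{dist_ineq}, and finishes with a union bound and Borel--Cantelli. But for $i=n$ and $m=n$, $\tilde f_n$ must agree on $[0,\sigma_{n-1}]$, up to a constant, with $\prod_{j<n}(\sigma_j^2-\sigma_n^2)$. Expanded in $\sigma_n^2$, this has alternating signs and a nonzero constant term. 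So neither the positivity of coefficients that \eqref{dist_ineq} requires nor any positive lower bound on the exponents can hold, and the claimed bound $e_{\tilde f}(n)$ cannot be right. Table~\ref{tab:1} fits this diagnosis: it matches the exact value $\mathbb{E}\,\sigma_2=0.3$ for $n=m=2$, but violates $\mathbb{E}\,\sigma_n\le\frac12$ for $n=m\ge 8$ (e.g.\ $0.956$ for $n=m=19$).

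So do not try to patch your argument toward the theorem as stated. A correct high-dimensional version needs a hypothesis such as $m/n\to\infty$, the analogue of Theorem~\ref{theorem2}. In that regime your Jacobi-ensemble picture, rescaled near $x=1$ to a Laguerre ensemble, is the right tool, and heuristically gives $1-\sigma_n^2$ of order $n/m$.
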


\begin{proof}
From the definitions of $\bm{Z}_0$ and $\bm{Z}_1$, the recursions defining $\bm{Z}_i$ for $i>1$, and the definitions of $d_i$, all elements of $\bm{Z}_i$ and the functions $d_i$ are polynomials in $x$, with all $\sigma_i$ sampled from univariate polynomial functions. Let the unnormalized conditional density function of $\sigma_i$, $\tilde{f}_i(\sigma_i|\sigma_1,...,\sigma_{i-1{}}):=\sigma_i^{e_1(i)+e_2(i)}|d_i(\sigma_i^2)|$, with domain $[0,\sigma_{i-1}]$, be written as  
\begin{alignat}{6}
\sum_{j}a_{ij}\sigma_i^{t_{ij}},\nonumber
\end{alignat}
where all coefficients $a_{ij}>0$. The polynomial changes with $n$, but as will be seen, the result is independent of the number of polynomial terms or the values of their coefficients. By normalization, $f_i(\sigma_i|\sigma_1,...,\sigma_{i-1})$ is equal to  
\begin{alignat}{6}
\frac{\sum_{j}a_{ij}\sigma_i^{t_{ij}}}{\sum_{j}\frac{a_{ij}}{t_{ij}+1}\sigma_{i-1}^{t_{ij}+1}}.\nonumber
\end{alignat} 
Viewing $f_i(\sigma_i|\sigma_1,...,\sigma_{i-1})$ as a weighted average of the terms
\begin{alignat}{6}
\frac{a_{ij}\sigma_i^{t_{ij}}}{\frac{a_{ij}}{t_{ij}+1}\sigma_{i-1}^{t_{ij}+1}}=\frac{t_{ij}+1}{\sigma_{i-1}}(\frac{\sigma_i}{\sigma_{i-1}})^{t_{ij}}\nonumber
\end{alignat} 
with weights 
\begin{alignat}{6}
\frac{\frac{a_{ij}}{t_{ij}+1}\sigma_{i-1}^{t_{ij}+1}}{\sum_{k}\frac{a_{ik}}{t_{ik}+1}\sigma_{i-1}^{t_{ik}+1}},\nonumber
\end{alignat} 
$f_i(\sigma_i|\sigma_1,...,\sigma_{i-1})$ can be bounded as 
\begin{alignat}{6}
\inf\limits_j\frac{t_{ij}+1}{\sigma_{i-1}}(\frac{\sigma_i}{\sigma_{i-1}})^{t_{ij}}\leq f_i(\sigma_i|\sigma_1,...,\sigma_{i-1})\leq \sup\limits_j\frac{t_{ij}+1}{\sigma_{i-1}}(\frac{\sigma_i}{\sigma_{i-1}})^{t_{ij}}.\label{dist_ineq}
\end{alignat} 

\noindent The task now is to lower bound the exponents $\{t_{ij}\}_j$ for each $i$. By showing that these exponents are growing sufficiently fast with $n$, the theorem can be proven. To first compute the entries of 
\begin{alignat}{6}
\bm{Z}_i(x)&=d_i(x)\bm{Z}_{i-2}(x)+\bm{Z}_{i-2}(x)\bm{H}(i)\bm{Z}_{i-2}(x),\nonumber
\end{alignat}
the values of $d_i$ are derived: When $i$ and $n$ are even, 
\begin{alignat}{6}
	d_i(x)&=\bm{X}(\sigma_{i-1}^2)^T\bm{Z}_{i-2}(x)\bm{X}(x)=[1,\sigma_{i-1}^2,...,\sigma_{i-1}^{2(n-1)}]\bm{Z}_{i-2}(x)[1,x,...,x^{n-1}]^T\nonumber\\    
    &=\sum_{p=1}^{n}\sum_{q=1}^{n}\sigma_{i-1}^{2(p-1)}x^{q-1}(\bm{Z}_{i-2}(x))_{p,q},\nonumber
\end{alignat}
and when $n$ is odd, 
\begin{alignat}{6}
	d_i(x)&=[\bm{X}(\sigma_{i-1}^2)^T,0]\bm{Z}_{i-2}(x)[\bm{X}(x);0]=[1,\sigma_{i-1}^2,...,\sigma_{i-1}^{2(n-1)},0]\bm{Z}_{i-2}(x)[1,x,...,x^{n-1},0]^T\nonumber\\
    &=\sum_{p=1}^{n}\sum_{q=1}^{n}\sigma_{i-1}^{2(p-1)}x^{q-1}(\bm{Z}_{i-2}(x))_{p,q}.\nonumber
\end{alignat}
When $i$ is odd and $n$ is even,
\begin{alignat}{6}
	d_i(x)&=[\bm{X}(\sigma_{i-2}^2)^T,0,0]\bm{Z}_{i-2}(x)[\bm{X}(\sigma_{i-1}^2);0;0]=\sum_{p=1}^{n}\sum_{q=1}^{n}\sigma_{i-2}^{2(p-1)}\sigma_{i-1}^{2(q-1)}(\bm{Z}_{i-2}(x))_{p,q},\nonumber
\end{alignat}
and when $n$ is odd,
\begin{alignat}{6}
	d_i(x)=&[\bm{X}(\sigma_{i-2}^2)^T,0]\bm{Z}_{i-2}(x)[\bm{X}(\sigma_{i-1}^2);0]=\sum_{p=1}^{n}\sum_{q=1}^{n}\sigma_{i-2}^{2(p-1)}\sigma_{i-1}^{2(q-1)}(\bm{Z}_{i-2}(x))_{p,q}.\nonumber
\end{alignat}

\noindent For the entries of $\bm{Z}_i$, when $i$ is even,
\begin{alignat}{6}
(\bm{Z}_i(x))_{jk}=(&\sum_{p=1}^{n}\sum_{q=1}^{n}\sigma_{i-1}^{2(p-1)}x^{q-1}(\bm{Z}_{i-2}(x))_{p,q})(\bm{Z}_{i-2}(x))_{jk}\label{Z_recurs_1}\\
+&\sum_{p=1}^n\sum_{q=1}^n(\bm{Z}_{i-2}(x))_{jp}\bm{H}(i)_{pq}(\bm{Z}_{i-2}(x))_{qk},\nonumber
\end{alignat}
and when $i$ is odd,
\begin{alignat}{6}
(\bm{Z}_i(x))_{jk}=(&\sum_{p=1}^{n}\sum_{q=1}^{n}\sigma_{i-2}^{2(p-1)}\sigma_{i-1}^{2(q-1)}(\bm{Z}_{i-2}(x))_{p,q})(\bm{Z}_{i-2}(x))_{jk}\label{Z_recurs_2}\\
+&\sum_{p=1}^n\sum_{q=1}^n(\bm{Z}_{i-2}(x))_{jp}\bm{H}(i)_{pq}(\bm{Z}_{i-2}(x))_{qk},\nonumber
\end{alignat}
where the summations involving $\bm{H}(i)$ only go up to $n$ given that $H(i)_{pq}=0$ when $p$ or $q>n$.
The matrices $\bm{Q}_0^{-1}$ and $\bm{Q}_1^{-1}$ in the definitions of $\bm{Z}_0(x)$ and $\bm{Z}_1(X)$, and the matrices $\bm{H}(i)$ are skew-symmetric, hence zero-diagonal. For simplicity, this will be ignored by lower bounding all exponents $\{t_{ij}\}_j$ of each $\sigma_i$, even those whose corresponding coefficient $a_{ij}=0$.   

Let $E(i)_{jk}$ be the infimum of the exponents of $x$ in the polynomial $(Z_i(x))_{jk}$, and 
$E(i):=\inf\limits_{j,k}E(i)_{jk}$. From Section \ref{notdef}, it can be observed that $E(0)=E(0)_{nn}$ and $E(1)=E(1)_{nn}$ 
for even or odd $n$, with the equalities holding uniquely for $(j,k)=(n,n)$ when $n\geq 2$. From the above recursions, for $i>1$, $E(i)=E(i)_{nn}$ and $E(i)=2E(i-2)$ for both even and odd $i$, by considering the term $(Z_{i-2}(x))_{nn}H(i)_{nn}(Z_{i-2}(x))_{nn}$ in \eqref{Z_recurs_1} and \eqref{Z_recurs_2}, and also $\sigma_{i-2}^{2(n-1)}\sigma_{i-1}^{2(n-1)}(Z_{i-2}(x))_{n,n}(Z_{i-2}(x))_{nn}$ in \eqref{Z_recurs_2}. 

Let $D(i)$ be a lower bound on the exponents of $x$ in the polynomial $d_i(x)$. By considering odd $i$ in particular, where $x$ is only an argument of $\bm{Z}_{i-2}$, setting $D(i)=E(i-2)$ for all $i$ is valid, where $D(2)=D(3)=2\beta-2n+2=n^2-3n+2$,
and in general, $D(i)=2^{\frac{i-2}{2}}(n^2-3n+2)$ and $D(i)=2^{\frac{i-3}{2}}(n^2-3n+2)$ for all even and odd $i>1$, respectively.

These exponents are with respect to the argument $x$, whereas $d_i$ is evaluated at  $\sigma_i^2$, so by adding $2D(i)$ to the lower bound of $e_1(i)+e_2(i)$, by considering $m=n$ from Section \ref{algorithm}, let $e_{\tilde{f}}(i)$ be a lower bound on the exponents of $\sigma_i$ contained in $\tilde{f}_i$ for $i>1$, where for even $i$, 
\begin{alignat}{6}
e_{\tilde{f}}(i)&=n-i-(i-1)n(n-1)+2*2^{\frac{i-2}{2}}(n^2-3n+2)\label{exp_lb_even}\\
&=(2^{\frac{i}{2}}-i+1)n^2+(i-3*2^{\frac{i}{2}})n+2^{\frac{i+2}{2}}-i,\nonumber
\end{alignat}
and for odd $i$,
\begin{alignat}{6}
e_{\tilde{f}}(i)&=n-i-(i-2)n(n-1)+2*2^{\frac{i-3}{2}}(n^2-3n+2)\label{exp_lb_odd}\\
&=(2^{\frac{i-1}{2}}-i+2)n^2+(i-3*2^{\frac{i-1}{2}}-1)n+2^{\frac{i+1}{2}}-i.\nonumber
\end{alignat}

\noindent The function $e_{\tilde{f}}$ can be further lower bounded by $c2^{\frac{i}{2}}n^2$ and $c2^{\frac{i-1}{2}}n^2$ for even and odd $i$ using a constant $0<c<\frac{1}{4}$ for $n$ sufficiently large: For even $i$,
\begin{alignat}{6}
\frac{e_{\tilde{f}}(i)}{2^{\frac{i}{2}}n^2}&=\frac{(2^{\frac{i}{2}}-i+1)n^2+(i-3*2^{\frac{i}{2}})n+2^{\frac{i+2}{2}}-i}{2^{\frac{i}{2}}n^2}\nonumber\\
&=\frac{(2^{\frac{i}{2}}-i+1)}{2^{\frac{i}{2}}}+\frac{(i-3*2^{\frac{i}{2}})}{2^{\frac{i}{2}}n}+\frac{2^{\frac{i+2}{2}}-i}{2^{\frac{i}{2}}n^2}\nonumber\\
&\geq \frac{1}{4} -\frac{3}{n}+0,\nonumber
\end{alignat}
where the first term equals $\frac{1}{4}$ when $i=4$. For odd $i$, 
\begin{alignat}{6}
\frac{e_{\tilde{f}}(i)}{2^{\frac{i-1}{2}}n^2}&=\frac{(2^{\frac{i-1}{2}}-i+2)n^2+(i-3*2^{\frac{i-1}{2}}-1)n+2^{\frac{i+1}{2}}-i}{2^{\frac{i-1}{2}}n^2}\nonumber\\
&\geq \frac{1}{4} -\frac{3}{n}+0,\nonumber 
\end{alignat}
where the bound $c2^{\frac{i-1}{2}}n^2<\frac{1}{4}2^{\frac{i-1}{2}}n^2$ is also valid for $i=1$: $mn-1\geq n^2-1>\frac{n^2}{4}$ when $n\geq 2$. For both even and odd $i$, let $n_c\geq 2$ be sufficiently large such that $e_{\tilde{f}}(i)\geq c2^{\frac{i-1}{2}}n^2$ holds for $i\geq 1$ and $n\geq n_c$ for a given $c>0$.

Given that a lower bound on $\{t_{ij}\}_j$ for each $i$ has been established, the next step is to upper bound $\PP(1-\sigma_n>\epsilon)$ for any $\epsilon>0$. Defining $\sigma_0:=1$,

\begin{alignat}{6}
\PP(1-\sigma_n>\epsilon)&=\PP(\sum_{i=1}^{n}\sigma_{i-1}-\sigma_{i}>\epsilon)\nonumber\\
&\leq\PP(\cup_{i=1}^{n}(\sigma_{i-1}-\sigma_i>\frac{\epsilon}{n}))\nonumber\\
&\leq\sum_{i=1}^{n}\PP(\sigma_{i-1}-\sigma_i>\frac{\epsilon}{n})\text{, where}  \label{part_1} 
\end{alignat}
\begin{alignat}{6}
\PP(\sigma_{i-1}-\sigma_i>\frac{\epsilon}{n})&=\int_0^1\cdot\cdot\cdot\int_0^{\sigma_{i-2}}\int_0^{\sigma_{i-1}-\frac{\epsilon}{n}}f_i(\sigma_1,...,\sigma_i)d\sigma_id\sigma_{i-1}...d\sigma_1\nonumber\\
&=\int_0^1\cdot\cdot\cdot\int_0^{\sigma_{i-2}}\int_0^{\sigma_{i-1}-\frac{\epsilon}{n}}f_i(\sigma_i|\sigma_1,...,\sigma_{i-1})d\sigma_if_{i-1}(\sigma_1,...,\sigma_{i-1})d\sigma_{i-1}...d\sigma_1\text{, and}\label{cond_prob}
\end{alignat}
\begin{alignat}{6}
\int_0^{\sigma_{i-1}-\frac{\epsilon}{n}}f_i(\sigma_i|\sigma_1,...,\sigma_{i-1})d\sigma_i&\leq\int_0^{\sigma_{i-1}-\frac{\epsilon}{n}}\sup\limits_{j}\frac{t_{ij}+1}{\sigma_{i-1}}(\frac{\sigma_i}{\sigma_{i-1}})^{t_{ij}}d\sigma_i\nonumber\\
&=\int_0^{1-\frac{\epsilon}{\sigma_{i-1}n}}\sup\limits_{j}(t_{ij}+1)y^{t_{ij}}dy\nonumber\\
&\leq \int_0^{1-\frac{\epsilon}{n}}\sup\limits_{j}(t_{ij}+1)y^{t_{ij}}dy,\label{sing_prob}
\end{alignat}
where the first inequality uses \eqref{dist_ineq}, and the equality uses the change of variable $y=\frac{\sigma_i}{\sigma_{i-1}}$. Given that the upper bound \eqref{sing_prob} is independent of $\{\sigma_j\}_{j=1}^{i-1}$, applying it to \eqref{cond_prob},  
\begin{alignat}{6}
\PP(\sigma_{i-1}-\sigma_i>\frac{\epsilon}{n})&\leq \int_0^{1-\frac{\epsilon}{n}}\sup\limits_{j}(t_{ij}+1)y^{t_{ij}}dy\int_0^1\cdot\cdot\cdot\int_0^{\sigma_{i-2}}f_{i-1}(\sigma_1,...,\sigma_{i-1})d\sigma_{i-1}...d\sigma_1\nonumber\\
&= \int_0^{1-\frac{\epsilon}{n}}\sup\limits_{j}(t_{ij}+1)y^{t_{ij}}dy.\label{final_sing_prob}
\end{alignat}
For all $j$, $t_{ij}\geq e_{\tilde{f}}(i)\geq c2^{\frac{i-1}{2}}n^2$ when $n\geq n_c$. It will now be shown that for sufficiently large $n$, $\sup\limits_j(t_{ij}+1)y^{t_{ij}}\leq (c2^{\frac{i-1}{2}}n^2+1)y^{c2^{\frac{i-1}{2}}n^2}$ for all $y\in[0,1-\frac{\epsilon}{n}]$. For a positive exponent $t>0$ and $y\in(0,1-\frac{\epsilon}{n}]$,
\begin{alignat}{6}
\frac{d}{dt}(t+1)y^{t}=y^{t}(1+(t+1)\ln(y))\leq 0 \label{neg_der}
\end{alignat}
when $t\geq - (\frac{1}{\ln(y)}+1)$. By enforcing $t\geq-(\frac{1}{\ln(1-\frac{\epsilon}{n})}+1)$, \eqref{neg_der} holds for all $y\in(0,1-\frac{\epsilon}{n}]$. Considering $t\geq cn^2$, i.e., $i=1$ in $c2^{\frac{i-1}{2}}n^2$, and $n\geq n^1_c:=\max\{\frac{1}{\epsilon c},n_c\}$, 
\begin{alignat}{6}
&&\epsilon cn&\geq 1\nonumber\\
\Rightarrow &&-\frac{\epsilon}{n}(cn^2+1)&\leq -1\nonumber\\
\Rightarrow &&e^{-\frac{\epsilon}{n}(cn^2+1)}&\leq e^{-1}\nonumber\\
\Rightarrow &&(1-\frac{\epsilon}{n})^{cn^2+1}&\leq e^{-1}\nonumber\\
\Rightarrow &&(cn^2+1)\ln(1-\frac{\epsilon}{n})&\leq -1\nonumber\\
\Rightarrow &&t\geq cn^2&\geq-\left(\frac{1}{\ln(1-\frac{\epsilon}{n})}+1\right),\label{lb_e}
\end{alignat}
where the fourth inequality uses the bound 
\begin{alignat}{6}
(1+x)^k\leq e^{xk},\label{class_ineq}
\end{alignat}
which holds for $k\geq 0$ and $x\geq -1$. For $n\geq n_c^1$, \eqref{neg_der} holds for $y\in(0,1-\frac{\epsilon}{n}]$ when $t\geq cn^2$. It follows that 
$\sup\limits_j(t_{ij}+1)y^{t_{ij}}\leq (c2^{\frac{i-1}{2}}n^2+1)y^{c2^{\frac{i-1}{2}}n^2}\leq (cn^2+1)y^{cn^2}$ for $y\in[0,1-\frac{\epsilon}{n}]$ when $n\geq n_c^1$, given that $t_{ij}\geq c2^{\frac{i-1}{2}}n^2\geq cn^2$ for all $j$ and $i\geq 1$.

Continuing from \eqref{final_sing_prob} assuming that $n\geq n^1_c$,
\begin{alignat}{6}
\PP(\sigma_{i-1}-\sigma_i>\frac{\epsilon}{n})&\leq \int_0^{1-\frac{\epsilon}{n}}(c2^{\frac{i-1}{2}}n^2+1)y^{c2^{\frac{i-1}{2}}n^2}dy\nonumber\\
&=\left(1-\frac{\epsilon}{n}\right)^{c2^{\frac{i-1}{2}}n^2+1}\nonumber\\
&\leq e^{-\frac{\epsilon}{n}(c2^{\frac{i-1}{2}}n^2+1)}\nonumber\\
&\leq e^{-\frac{\epsilon}{n}(c(1+(i-1)(\sqrt{2}-1))n^2+1)}\nonumber\\
&=e^{-\epsilon(cn+\frac{1}{n})}e^{-\epsilon cn(\sqrt{2}-1)(i-1)},\label{part_2} 
\end{alignat}
where the second inequality uses \eqref{class_ineq} and the third uses Bernoulli's inequality, $(1+x)^k\geq 1+kx$ for nonnegative integer $k$ and $x>-1$, to bound $2^{\frac{i-1}{2}}\geq 1+(i-1)(\sqrt{2}-1)$ for $i\geq 1$. Plugging \eqref{part_2} into \eqref{part_1} for $n\geq n^1_c$,

\begin{alignat}{6}
\PP(1-\sigma_n>\epsilon)&\leq e^{-\epsilon(cn+\frac{1}{n})}\sum_{i=0}^{n-1}e^{-\epsilon cn(\sqrt{2}-1)i}\nonumber\\
&= e^{-\epsilon(cn+\frac{1}{n})}\frac{1-e^{-\epsilon cn^2(\sqrt{2}-1)}}{1-e^{-\epsilon cn(\sqrt{2}-1)}}\nonumber\\
&\leq \frac{e^{-\epsilon cn}}{1-e^{-\epsilon cn(\sqrt{2}-1)}}.\nonumber
\end{alignat}

\noindent The series 
\begin{alignat}{6}
\sum_{n=n^1_c}^{\infty}\PP(1-\sigma_n>\epsilon)\leq \sum_{n=n^1_c}^{\infty}\frac{e^{-\epsilon cn}}{1-e^{-\epsilon cn(\sqrt{2}-1)}}\nonumber
\end{alignat}
is convergent for all $\epsilon>0$ using the ratio test: 
\begin{alignat}{6}
\frac{e^{-\epsilon c(n+1)}}{e^{-\epsilon cn}}\frac{1-e^{-\epsilon cn(\sqrt{2}-1)}}{1-e^{-\epsilon c(n+1)(\sqrt{2}-1)}}&\leq e^{-\epsilon c}<1,\nonumber
\end{alignat}
hence $\sigma_n\rightarrow 1$ almost surely using the first Borel-Cantelli lemma \citep[Appendix A13]{williams1991}.
\end{proof}

\noindent The previous theorem assumed that $n\rightarrow\infty$, implying that $m \rightarrow \infty$ as well. The following theorem considers the case where $m\rightarrow\infty$ for a fixed value of $n$.

\begin{theorem}
\label{theorem2}
For matrices uniformly distributed in the unit spectral norm ball, $\bm{\Delta}\sim\U(B)$, with a fixed number of rows $n$, $\lim\limits_{m\rightarrow \infty}\sigma_n=1$ almost surely.
\end{theorem}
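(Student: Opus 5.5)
The plan reuses the skeleton of the proof of Theorem \ref{theorem1}, but now the growth of the exponents comes from $m-n$ rather than from $n$. First, the sandwich bound \eqref{dist_ineq} on $f_i(\sigma_i|\sigma_1,\dots,\sigma_{i-1})$ is still valid. Its derivation only used that the unnormalized conditional density $\tilde f_i$ is a polynomial $\sum_j a_{ij}\sigma_i^{t_{ij}}$ with nonnegative coefficients on $[0,\sigma_{i-1}]$. That property holds for every $m$.

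The second step is a lower bound on $t_{ij}$ that grows linearly in $m$, uniformly over $i\le n$. The exponent lower bound $E(i)$ on the polynomials $\bm{Z}_i$, and hence $D(i)$, does not involve $m$, since $\beta$ depends only on $n$. The corrections $e_2(i)$ and $2D(i)$ are therefore constants once $n$ is fixed. The only $m$-dependence enters through
\[
e_1(i)=(m-n)(n-i+1)+n-i\ \ge\ (m-n)+n-i,\qquad i\le n .
\]
For $i\ge 2$ this gives $t_{ij}\ge e_1(i)+e_2(i)+2D(i)\ge m - C_n$, with $C_n$ a constant depending only on $n$. A crude choice is $C_n=2n+n\max_{i\le n}|e_2(i)|$, using $D(i)\ge 0$ for $n\ge 2$. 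For $i=1$ the density is $\propto\sigma_1^{mn-1}$, and the exponent $mn-1\ge m-1$. The case $n=1$ needs no separate treatment: there $\sigma_1$ alone has density $\propto \sigma_1^{m-1}$. Hence there are a constant $C_n$ and a threshold $m_0$ such that $t_{ij}\ge m/2$ for all $i\le n$, all $j$ and all $m\ge m_0$.

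The third step repeats the tail estimate. As in \eqref{part_1}, I split $\PP(1-\sigma_n>\epsilon)\le \sum_{i=1}^n\PP(\sigma_{i-1}-\sigma_i>\epsilon/n)$, with $\sigma_0:=1$. By \eqref{cond_prob}–\eqref{final_sing_prob} each summand is at most $\int_0^{1-\epsilon/n}\sup_j(t_{ij}+1)y^{t_{ij}}\,dy$. By the monotonicity argument \eqref{neg_der}, the function $(t+1)y^t$ is decreasing in $t$ on $y\in(0,1-\epsilon/n]$ once $t\ge -(1/\ln(1-\epsilon/n)+1)$. That condition holds once $m/2\ge n/\epsilon$, by the same chain as \eqref{lb_e}. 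The supremum is then bounded by $(m/2+1)y^{m/2}$. Integrating and applying \eqref{class_ineq}, each summand is at most $e^{-\epsilon(m/2+1)/n}$. This gives
\[
\PP(1-\sigma_n>\epsilon)\le n\,e^{-\epsilon m/(2n)} .
\]
This is summable in $m$ for each fixed $\epsilon>0$, so the first Borel–Cantelli lemma yields $\sigma_n\to1$ almost surely as $m\to\infty$. Since $\sigma_i\ge\sigma_n$ and $\sigma_i\le1$, all singular values converge to $1$.

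The main obstacle is the second step: confirming that no $m$-dependence hides in the $\bm{Z}_i$ recursion beyond $e_1$. The quantity $v=\frac12(m-n-1)$ enters $\bm{S}$ and $\bm{F}$. However, it only affects coefficients, never the exponents of $x$. The proof of Theorem \ref{theorem1} already treats the coefficients as irrelevant, provided the nonnegative-polynomial form of $\tilde f_i$ holds. So I would check carefully that $E(i)$ and $D(i)$ are computed purely from the diagonal scalings $x^{\beta-k}$, which makes them independent of $m$. I would also make explicit that the constant $C_n$ is uniform over the finitely many indices $i\le n$.
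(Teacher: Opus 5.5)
Your proposal is correct and follows essentially the same route as the paper. The paper writes $e_{\tilde f}(i)=m(n-i+1)+z(i,n)\ge m+\min_i z(i,n)\ge cm$ for large $m$, which matches your $t_{ij}\ge m-C_n\ge m/2$, and then reruns the monotonicity argument, the integration, the union bound and Borel--Cantelli to get $\PP(1-\sigma_n>\epsilon)\le n e^{-\frac{\epsilon}{n}(cm+1)}$; the differences are cosmetic (you fix $c=\frac12$ and use only $D(i)\ge 0$ instead of the explicit $z(i,n)$).
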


\begin{proof}
The proof begins by redefining $e_{\tilde{f}}(i)$ for $i>1$ in equations \eqref{exp_lb_even} and \eqref{exp_lb_odd} by not assuming that $m=n$. Using the exact value of $e_1(i)$ found in Section \ref{algorithm}, for even $i$, 
\begin{alignat}{6}
e_{\tilde{f}}(i)&=(m-n)(n-i+1)+n-i-(i-1)n(n-1)+2^{\frac{i}{2}}(n^2-3n+2)\nonumber\\
&=:m(n-i+1)+z(i,n),\nonumber
\end{alignat}
and for odd $i$,
\begin{alignat}{6}
e_{\tilde{f}}(i)&=(m-n)(n-i+1)+n-i-(i-2)n(n-1)+2^{\frac{i-1}{2}}(n^2-3n+2)\nonumber\\
&=:m(n-i+1)+z(i,n),\nonumber
\end{alignat}
where the terms in $e_{\tilde{f}}(i)$ not including $m$ have been collected into the function $z$, and $e_{\tilde{f}}$ is extended to the case $i=1$ by defining $z(1,n):=-1$. Given that $\min\limits_iz(i,n)\in\ZZ$ is a fixed value independent of $m$, the function $e_{\tilde{f}}(i)\geq m+\min\limits_iz(i,n)$ can be further lower bounded by $cm$ for a constant $0<c<1$ when $m\geq m_c$ for a sufficiently large $m_c\in\ZZ$.

Wanting to again upper bound, in this case only over a finite number of terms, $\max\limits_{j}(t_{ij}+1)y^{t_{ij}}$ in \eqref{final_sing_prob}, by considering $m\geq m^n_c:=\max\{\frac{n}{\epsilon c},m_c\}$,
\begin{alignat}{6}
&&\frac{\epsilon}{n}cm&\geq 1\nonumber\\
\Rightarrow &&-\frac{\epsilon}{n}(cm+1)&\leq -1\nonumber\\
\Rightarrow &&e^{-\frac{\epsilon}{n}(cm+1)}&\leq e^{-1}\nonumber\\
\Rightarrow &&(1-\frac{\epsilon}{n})^{cm+1}&\leq e^{-1}\nonumber\\
\Rightarrow &&(cm+1)\ln(1-\frac{\epsilon}{n})&\leq -1\nonumber\\
\Rightarrow &&cm&\geq-\left(\frac{1}{\ln(1-\frac{\epsilon}{n})}+1\right),\nonumber
\end{alignat}
using the same steps to show \eqref{lb_e}. It follows that \eqref{neg_der} holds for $y\in(0,1-\frac{\epsilon}{n}]$ when $t\geq cm^n_c$, hence $\max\limits_j(t_{ij}+1)y^{t_{ij}}\leq (cm+1)y^{cm}$ for $y\in[0,1-\frac{\epsilon}{n}]$ when $m\geq m^n_c$, given that $t_{ij}\geq cm$ for all $j$ and $i\geq 1$.

Continuing from \eqref{final_sing_prob} assuming that $m\geq m^n_c$,
\begin{alignat}{6}
\PP(\sigma_{i-1}-\sigma_i>\frac{\epsilon}{n})&\leq \int_0^{1-\frac{\epsilon}{n}}(cm+1)y^{cm}dy\nonumber\\
&=\left(1-\frac{\epsilon}{n}\right)^{cm+1}\nonumber\\
&\leq e^{-\frac{\epsilon}{n}(cm+1)}.\label{part_2_2} 
\end{alignat}
Plugging \eqref{part_2_2} into \eqref{part_1}, $\PP(1-\sigma_n>\epsilon)\leq ne^{-\frac{\epsilon}{n}(cm+1)}$. By applying the ratio test,
\begin{alignat}{6}
\frac{ne^{-\frac{\epsilon}{n}(c(m+1)+1)}}{ne^{-\frac{\epsilon}{n}(cm+1)}}=e^{-\frac{\epsilon}{n}c}<1,\nonumber
\end{alignat}
\noindent it follows that the series 
\begin{alignat}{6}
\sum_{m=m^n_c}^{\infty}\PP(1-\sigma_n>\epsilon)\leq \sum_{m=m^n_c}^{\infty}ne^{-\frac{\epsilon}{n}(cm+1)}\nonumber
\end{alignat}
is convergent for all $\epsilon>0$, proving that $\sigma_n\rightarrow 1$ almost surely.    
\end{proof}

\section{Computational Experiments}
\label{Num_exp}

This section empirically verifies the convergence of $\sigma_n\rightarrow 1$, demonstrates the inability to scale up Algorithm \ref{alg:sigma}, and shows the viability of only using Algorithm \ref{alg:UV} to approximately sample from $\U(B)$. Before presenting these results, implementation details are first described.

\subsection{Implementation Details}

All computation was done using 64-bit floating-point arithmetic using Numpy 2.4.4 in Python 3.13.13. All function evaluations when sampling $\sigma_i$ are performed over a grid of $M:=1000$ evenly-spaced points $\bm{X}^i$ in the interval $[X_1,\hat{\sigma}_{i-1}]$, where the minimum value was fixed to $X_1:=0.01$: The need to take large negative exponents of $\sigma_i$ results in overflows for small values, while at the same time, the likelihood of $\sigma_i<X_1$ decreases rapidly as $n$ increases. When sampling $\sigma_i$, which will be described next, the point $X_0=0$ is also included, which still allows, through linear interpolation, sampled values of $\sigma_i\in [0,X_1]$. Let the extended set of points be denoted as $\tilde{\bm{X}}^i:=\bm{X}^i\cup \{X_0\}$.\\

\noindent\underline{Sampling $\sigma_i$}: Recalling that $\tilde{f}_i(\cdot|\sigma_1,...,\sigma_{i-1})$ is the unnormalized distribution proportional to $f_i(\cdot|\sigma_1,...,\sigma_{i-1})$, $\sigma_i$ is sampled based on the inversion method \citep[Chapter 2]{devroye1986}: An unnormalized conditional CDF is first computed given the realized samples $\{\hat{\sigma}_j\}_{j=1}^{i-1}$, 
\begin{alignat}{6}
\tilde{F}_i(X^i_j|\hat{\sigma}_1,...,\hat{\sigma}_{i-1}):=\sum_{k=1}^j\tilde{f}_i(X^i_k|\hat{\sigma}_1,...,\hat{\sigma}_{i-1}),\nonumber
\end{alignat}
\noindent with the conditional CDF of $\sigma_i$ estimated as 
\begin{alignat}{6}
F_i(X^i_j|\hat{\sigma}_1,...,\hat{\sigma}_{i-1}):=\frac{\tilde{F}_i(X^i_j|\hat{\sigma}_1,...,\hat{\sigma}_{i-1})}{\tilde{F}_i(X^i_M|\hat{\sigma}_1,...,\hat{\sigma}_{i-1})}\nonumber
\end{alignat}
\noindent over the points $\bm{X}^i$. The function $F_i:=F_i(\cdot|\hat{\sigma}_1,...,\hat{\sigma}_{i-1})$ is then extended to include $X_0$ by setting $F_i(X_0)=0$. By sampling $\hat{u}\sim \U[0,1]$, setting 
\begin{alignat}{6}
j^*=\max(\min\{j:\hat{u}\leq F_i(\tilde{X}^i_j)\},1),\nonumber
\end{alignat}
and using linear interpolation,  
\begin{alignat}{6}
\hat{\sigma}_i=\frac{((\tilde{X}^i_{j^*}-\hat{u})F_i(\tilde{X}^i_{j^*-1})+(\hat{u}-\tilde{X}^i_{j^*-1})F_i(\tilde{X}^i_{j^*}))}{(\tilde{X}^i_{j^*}-\tilde{X}^i_{j^*-1})}.\nonumber
\end{alignat}
\noindent\underline{Computing $\bm{Z}_i$}: Versus Algorithm \ref{alg:sigma}, where $\bm{Z}_i$ is updated in iteration $i$ on line \ref{rec:2}, given that $\bm{Z}_i$ must be evaluated at the points $\bm{X}^{i+2}$, which are not known until iteration $i+1$ has completed, $\bm{Z}_i$ is instead computed at the beginning of iteration $i+2$. In order to delay underflows and overflows, if 
\begin{alignat}{6}
0<Z_i^{\max}:=\max\limits_{j,k,x\in \bm{X}^{i+2}}|(Z_i(x))_{jk}|<1\quad\text{or}\quad Z_i^{\max}>\sqrt{\text{max\_value}},\nonumber
\end{alignat}
$\bm{Z}_i$ is divided by $Z_i^{\max}$ or $Z_i^{\max}/\sqrt{\text{max\_value}}$, respectively, where $\text{max\_value}$ is the maximum representable number in the 64-bit floating-point representation, with $\sqrt{\text{max\_value}}=1.341\times10^{154}$. As a final detail, there were only two $\bm{Z}_i$ tensors stored, $\bm{Z}_e$ and $\bm{Z}_o$ for even and odd indexed singular values, respectively, with one being updated at the beginning of each iteration.\\ 

\noindent\underline{Exponent Canceling}: The unnormalized distribution $\tilde{f}_i(\cdot|\sigma_1,...,\sigma_{i-1})$ consists of the product of $\sigma_i^{e_1(i)+e_2(i)}$ and $|d_i(\sigma_i^2)|$. As given by $2D(i)$ in the proof of Theorem \ref{theorem1}, the exponents of $\sigma_i$ in $|d_i(\sigma_i^2)|$ are non-negative and increasing with $n$ and $i$. The exponent $e_1(i)+e_2(i)$, found in Section \ref{algorithm}, is always decreasing in $i$, and for values of $m<2n$, $e_1(i)+e_2(i)$ is negative: 
\begin{alignat}{6}
e_1(i)+e_2(i)&\leq (m-n)(n-1)+n-2-n(n-1)\nonumber\\
             &\leq (n-1)^2+n-2-n(n-1)\nonumber\\
             &=-1,\nonumber
\end{alignat}
where the first inequality considers $i=2$, and the second inequality uses $m=2n-1$.

For values of $\sigma_i<1$, as $n$ increases, large negative exponents $e_1(i)+e_2(i)$ cause overflows, whereas large positive exponents within $|d_i(\sigma_i^2)|$ cause underflows, which can be remedied by shifting negative exponents from $e_1(i)+e_2(i)$ to $|d_i(\sigma_i^2)|$. 

For $i\in\{2,3\}$, $e_1(i)+e_2(i)$ is initially increased by a quantity $e_3(2)=e_3(3)>0$. To correct for this, within $\bm{Z}_0$ and $\bm{Z}_1$, $\beta$ is replaced by $\beta-\frac{e_3(2)}{4}$, resulting in all of the polynomial exponents within $\bm{Z}_0(\sigma_2^2)$ and $\bm{Z}_1(\sigma_3^2)$, and subsequently within $d_2(\sigma_2^2)$ and $d_3(\sigma_3^2)$, to decrease by $e_3(2)$. This initial injection of $-e_3(2)$ decreases exponentially given the recursive updates of $\bm{Z}_i$ on line \ref{rec:2} of Algorithm \ref{alg:sigma}. At iteration $i$, the exponent of $\sigma_i$ in $d_i(\sigma_i^2)$ has decreased by $-e_3(2)2^{\lfloor{\frac{i}{2}\rfloor}-1}$. To correct for this, for $i>3$ $e_1(i)+e_2(i)$ is increased by $e_3(i):=e_3(2)2^{\lfloor{\frac{i}{2}\rfloor}-1}$.

The initial quantity $e_3(2)$ is set to 
\begin{alignat}{6}
e_3(2):&=\frac{\min(\max(-(e_1(n)+e_2(n)),0),2D(n))}{2^{\lfloor{\frac{n}{2}\rfloor}-1}}. \label{ec_formula}
\end{alignat}
When $e_1(n)+e_2(n)<0$ and $-(e_1(n)+e_2(n))\leq 2D(n)$, 
$e_1(n)+e_2(n)+e_3(n)=0$, meaning that by the $n^{th}$ iteration the negative exponent $e_1(n)+e_2(n)$ has been cancelled out with the positive exponents of $\sigma_n$ within $|d_n(\sigma_n^2)|$, and likewise, when $-(e_1(n)+e_2(n))>2D(n)$, the positive exponents of $\sigma_n$ within $|d_n(\sigma_n^2)|$ are cancelled out with the negative exponent $e_1(n)+e_2(n)$. 

In addition to using exponent canceling, the convention that $0*\infty=0$ is applied when computing $\tilde{f}_i(\cdot|\hat{\sigma}_1,...,\hat{\sigma}_{i-1})$ over $\bm{X}^i$ by setting $\tilde{f}_i(x|\hat{\sigma}_1,...,\hat{\sigma}_{i-1})=0$ when $d_i(x^2)=0$, regardless of $x^{e_1(i)+e_2(i)}$ overflowing or not.

\subsection{Results}

With the described implementation, for each choice of $(n,m)$, $n$ singular values were generated 1000 times. Beginning with square matrices, it was only possible to generate up to $n=19$ singular values. The experiment with $n=m=20$ was stopped after the computed conditional CDF underflowed for all $x\in \bm{X}^{20}$ in the $99^{th}$ trial. Fixing $n=19$, singular values were then generated for $m\in\{50,100,250,500,1000\}$. The mean and standard deviation of all sampled singular values are given in Table \ref{tab:1}. In each trial, samples of $\bm{U}$ and $\bm{V}$ were also generated using Algorithm \ref{alg:UV}, with the computed errors 
\begin{alignat}{6}
\text{ave error}&:=\frac{1}{nm}\sum_{j=1}^n\sum_{k=1}^m|(\hat{\bm{U}}\hat{\bm{\Sigma}}\hat{\bm{V}}^T-\hat{\bm{U}}\hat{\bm{V}}^T)_{jk}|\quad\text{and}\label{mat_errors}\\
\text{max error}&:=\max\limits_{j,k}|(\hat{\bm{U}}\hat{\bm{\Sigma}}\hat{\bm{V}}^T-\hat{\bm{U}}\hat{\bm{V}}^T)_{jk}|\nonumber
\end{alignat}
\noindent from replacing $\hat{\bm{\Sigma}}:=\diag(\hat{\sigma}_1,..,\hat{\sigma}_n)$ with an identity matrix, bypassing the use of Algorithm 

\begin{table}[H]
\centering
\resizebox{1.0\textwidth}{!}{
\setlength{\tabcolsep}{4pt}
	\begin{tabular}{ccccccccccccccccccccccc}		
	\hline
    n & m & & $\hat{\sigma}_1$ & $\hat{\sigma}_2$ & $\hat{\sigma}_3$ & $\hat{\sigma}_4$ & $\hat{\sigma}_5$ & $\hat{\sigma}_6$ & $\hat{\sigma}_7$ & $\hat{\sigma}_8$ & $\hat{\sigma}_9$ & $\hat{\sigma}_{10}$ & $\hat{\sigma}_{11}$ & $\hat{\sigma}_{12}$ & $\hat{\sigma}_{13}$ & $\hat{\sigma}_{14}$ & $\hat{\sigma}_{15}$ & $\hat{\sigma}_{16}$ & $\hat{\sigma}_{17}$ & $\hat{\sigma}_{18}$ & $\hat{\sigma}_{19}$\\
    \hline
	\multirow{2}{*}{1}& \multirow{2}{*}{1}&$\mu$ & 0.509 &  &  &  &  &  &  &  &  &  &  &  &  & &  &  &  &  &  \\ 
                      &  & $s$  & 0.287 &  &  &  &  &  &  &  &  &  &  &  &  &  & &  &  &  & \\		 	
    \hline			
    \multirow{2}{*}{2}& \multirow{2}{*}{2}& $\mu$  & 0.803 & 0.299 &  &  &  &  &  &  &  &  &  &  &  & &  &  &  & & \\ 
                      & & $s$   & 0.159 & 0.203 &  &  &  &  &  &  &  &  &  &  &  & &  &  &  & & \\		 	
    \hline		
    \multirow{2}{*}{3}& \multirow{2}{*}{3}& $\mu$  & 0.902 & 0.604 & 0.210 &  &  &  &  &  &  &  &  &  & &  &  &  & &  & \\ 
                      & & $s$   & 0.087 & 0.168 & 0.146 &  &  &  &  &  &  &  &  &  & &  &  &  &  &  & \\		 	
    \hline		
    \multirow{2}{*}{4}&  \multirow{2}{*}{4}& $\mu$  & 0.942 & 0.759 & 0.478 & 0.164 &  &  &  &  &  &  & &  &  &  & &  &  &  & \\ 
                      & & $s$   & 0.053 & 0.115 & 0.143 & 0.113 &  &  &  &  &  &  &  &  & &  &  &  & &  & \\		 	
    \hline		
    \multirow{2}{*}{5}& \multirow{2}{*}{5}& $\mu$  & 0.962 & 0.840 & 0.644 & 0.394 & 0.052 &  &  &  &  &  &  &  &  & &  &  &  & & \\ 
                      & & $s$   & 0.035 & 0.081 & 0.111 & 0.125 & 0.070 &  &  &  &  &  &  &  &  & &  &  &  & & \\		 	
    \hline		
    \multirow{2}{*}{6}&  \multirow{2}{*}{6}& $\mu$  & 0.974 & 0.887 & 0.745 & 0.556 & 0.325 & 0.309 &  &  &  &  &  &  &  & &  &  &  & &  \\ 
                      & & $s$  & 0.025 & 0.059 & 0.085 & 0.105 & 0.125 & 0.118 &  &  &  &  &  &  &  & &  &  &  & &  \\		 	
    \hline		
    \multirow{2}{*}{7}&\multirow{2}{*}{7}&$\mu$  & 0.980 & 0.916 & 0.809 & 0.664 & 0.489 & 0.472 & 0.452 &  &  &  &  &  & &  &  &  & &  & \\ 
                      &  & $s$  & 0.019 & 0.044 & 0.066 & 0.085 & 0.101 & 0.097 & 0.092 &  &  &  &  &  & &  &  &  & &  & \\		 	
    \hline
    \multirow{2}{*}{8}&\multirow{2}{*}{8}&$\mu$  & 0.985 & 0.935 & 0.852 & 0.737 & 0.597 & 0.582 & 0.564 & 0.562 &  &  &  &  & &  &  &  & &  & \\ 
                      & & $s$   & 0.014 & 0.035 & 0.052 & 0.070 & 0.086 & 0.083 & 0.081 & 0.080 &  &  &  &  & &  &  &  & &  & \\		 	
    \hline
    \multirow{2}{*}{9}& \multirow{2}{*}{9}&$\mu$  & 0.988 & 0.948 & 0.882 & 0.789 & 0.675 & 0.662 & 0.649 & 0.647 & 0.645 &  &  & &  &  &  & &  &  & \\ 
                      & & $s$  & 0.011 & 0.028 & 0.042 & 0.057 & 0.072 & 0.070 & 0.071 & 0.071 & 0.071 &  &  & &  &  &  & &  &  & \\	
    \hline
     \multirow{2}{*}{10}&\multirow{2}{*}{10}&$\mu$  & 0.990 & 0.958 & 0.903 & 0.828 & 0.735 & 0.725 & 0.718 & 0.717 & 0.715 & 0.714 &  &  & &  &  &  & &  & \\ 
                      & & $s$   & 0.009 & 0.023 & 0.035 & 0.049 & 0.065 & 0.065 & 0.065 & 0.065 & 0.065 & 0.065 &  &  &  & &  &  &  & &  \\		 	
    \hline
     \multirow{2}{*}{11}& \multirow{2}{*}{11}& $\mu$  & 0.992 & 0.965 & 0.920 & 0.884 & 0.839 & 0.834 & 0.828 & 0.827 & 0.825 & 0.824 & 0.823 &  & &  &  &  & &  & \\ 
                      & & $s$  & 0.008 & 0.019 & 0.029 & 0.050 & 0.078 & 0.078 & 0.078 & 0.078 & 0.077 & 0.077 & 0.077 &  &  & &  &  &  & &  \\		 	
    \hline
     \multirow{2}{*}{12}&  \multirow{2}{*}{12}& $\mu$  & 0.993 & 0.970 & 0.932 & 0.919 & 0.900 & 0.896 & 0.892 & 0.890 & 0.889 & 0.888 & 0.887 & 0.886  &  & &  &  &  & &  \\ 
                      &  & $s$  & 0.006 & 0.016 & 0.025 & 0.027 & 0.034 & 0.034 & 0.034 & 0.034 & 0.034 & 0.034 & 0.034 & 0.034  &  & &  &  &  & & \\		 	
    \hline
     \multirow{2}{*}{13}&  \multirow{2}{*}{13}& $\mu$  & 0.994 & 0.975 & 0.942 & 0.931 & 0.917 & 0.913 & 0.910 & 0.908 & 0.907 & 0.906 & 0.905 & 0.904 & 0.904  & &  &  &  & & \\ 
                      & & $s$  & 0.006 & 0.014 & 0.021 & 0.023 & 0.026 & 0.026 & 0.026 & 0.026 & 0.026 & 0.026 & 0.026 & 0.026 & 0.026  & &  &  &  & & \\		 	
    \hline
     \multirow{2}{*}{14}& \multirow{2}{*}{14}&$\mu$  & 0.995 & 0.980 & 0.952 & 0.942 & 0.930 & 0.927 & 0.924 & 0.923 & 0.922 & 0.921 & 0.920 & 0.919 & 0.919 & 0.918 &  &  &  & & \\ 
                      & & $s$  & 0.005 & 0.011 & 0.018 & 0.020 & 0.022 & 0.022 & 0.023 & 0.022 & 0.023 & 0.022 & 0.022 & 0.023 & 0.023 & 0.023  & &  &  &  & \\		 	
    \hline
     \multirow{2}{*}{15}&  \multirow{2}{*}{15}& $\mu$  & 0.996 & 0.982 & 0.960 & 0.951 & 0.942 & 0.939 & 0.936 & 0.935 & 0.934 & 0.933 & 0.933 & 0.932 & 0.931 & 0.931 & 0.930 & &  &  &  \\ 
                      &  & $s$   & 0.004 & 0.010 & 0.015 & 0.017 & 0.019 & 0.019 & 0.019 & 0.019 & 0.019 & 0.019 & 0.019 & 0.019 & 0.019 & 0.019 & 0.019 &  & &  & \\	
    \hline
     \multirow{2}{*}{16}& \multirow{2}{*}{16}&  $\mu$  & 0.996 & 0.984 & 0.964 & 0.957 & 0.949 & 0.947 & 0.945 & 0.944 & 0.943 & 0.942 & 0.941 & 0.941 & 0.940 & 0.940 & 0.939 & 0.939 &  &  &  \\ 
                      & & $s$   & 0.004 & 0.009 & 0.015 & 0.016 & 0.018 & 0.018 & 0.018 & 0.018 & 0.018 & 0.018 & 0.018 & 0.018 & 0.018 & 0.018 & 0.018 & 0.018 &  &  & \\		 	
    \hline
     \multirow{2}{*}{17}&\multirow{2}{*}{17}&$\mu$  & 0.997 & 0.987 & 0.972 & 0.966 & 0.959 & 0.957 & 0.954 & 0.953 & 0.952 & 0.952 & 0.951 & 0.95 & 0.95 & 0.949 & 0.949 & 0.948 & 0.948  &  & \\ 
                      & & $s$   & 0.003 &  0.008 & 0.011 & 0.012 & 0.014 & 0.014 & 0.014 & 0.014 & 0.014 & 0.014 & 0.014 & 0.014 & 0.014 & 0.014 & 0.014 & 0.014 & 0.014 &  & \\		 	
    \hline
     \multirow{2}{*}{18}&\multirow{2}{*}{18}&$\mu$  & 0.997 & 0.988 & 0.975 & 0.970 & 0.964 & 0.962 & 0.960 & 0.959 & 0.958 & 0.957 & 0.957 & 0.956 & 0.956 & 0.955 & 0.955 & 0.954 & 0.954 & 0.953 & \\ 
                      & & $s$   & 0.003 &  0.007 & 0.012 & 0.012 & 0.014 & 0.014 & 0.014 & 0.014 & 0.014 & 0.014 & 0.014 & 0.014 & 0.014 & 0.014 & 0.014 & 0.014 & 0.014 & 0.014 & \\		 	
     \hline
     \multirow{2}{*}{19}& \multirow{2}{*}{19}&$\mu$  & 0.997 & 0.989 & 0.976 & 0.972 & 0.966 & 0.965 & 0.963 & 0.962 & 0.961 & 0.961 & 0.960 & 0.959 & 0.959 & 0.958 & 0.958 & 0.958 & 0.957 & 0.957 &  0.956 \\ 
                       & & $s$   & 0.003 & 0.006 & 0.009 & 0.01 & 0.011 & 0.011 & 0.011 & 0.011 & 0.011 & 0.011 & 0.011 & 0.011 & 0.011 & 0.011 & 0.011 & 0.011 & 0.011 & 0.011 & 0.011 \\		 	                  
    \hline   
    \multirow{2}{*}{19}& \multirow{2}{*}{50}&$\mu$  &  0.999 & 0.997 & 0.996 & 0.994 & 0.992 & 0.990 & 0.989 & 0.988 & 0.987 & 0.987 & 0.986 & 0.985 & 0.985 & 0.984 & 0.984 & 0.983 & 0.983 & 0.982 & 0.982 \\ 
                       &                    & $s$   &  0.001 & 0.002 & 0.002 & 0.003 & 0.003 & 0.004 & 0.004 & 0.004 & 0.004 & 0.004 & 0.004 & 0.004 & 0.004 & 0.004 & 0.004 & 0.004 & 0.004 & 0.004 & 0.004\\		 	                  
    \hline 
    \multirow{2}{*}{19}& \multirow{2}{*}{100}&$\mu$  &  0.999 & 0.998 & 0.998 & 0.997 & 0.996 & 0.995 & 0.994 & 0.993 & 0.992 & 0.991 & 0.991 & 0.990 & 0.989 & 0.989 & 0.988 & 0.988 & 0.987 & 0.987 & 0.986\\ 
                       &                     & $s$   &  0.001 & 0.001 & 0.001 & 0.001 & 0.002 & 0.002 & 0.002 & 0.002 & 0.002 & 0.002 & 0.002 & 0.002 & 0.002 & 0.003 & 0.003 & 0.003 & 0.003 & 0.003 & 0.003\\		 	                  
    \hline 
    \multirow{2}{*}{19}& \multirow{2}{*}{250}&$\mu$  &  1.000 & 0.999 & 0.998 & 0.998 & 0.997 & 0.997 & 0.996 & 0.995 & 0.995 & 0.994 & 0.993 & 0.993 & 0.992 & 0.992 & 0.991 & 0.991 & 0.990 & 0.990 & 0.989\\ 
                       &                     & $s$   &  0.000 & 0.000 & 0.001 & 0.001 & 0.001 & 0.001 & 0.001 & 0.001 & 0.001 & 0.001 & 0.001 & 0.001 & 0.002 & 0.002 & 0.002 & 0.002 & 0.002 & 0.002 & 0.002\\		 	                  
    \hline 
    \multirow{2}{*}{19}& \multirow{2}{*}{500}&$\mu$  & 1.000 & 0.999 & 0.999 & 0.998 & 0.998 & 0.997 & 0.996 & 0.996 & 0.995 & 0.995 & 0.994 & 0.994 & 0.993 & 0.993 & 0.992 & 0.992 & 0.991 & 0.991 & 0.990 \\ 
                       &                     & $s$   & 0.000 & 0.000 & 0.000 & 0.001 & 0.001 & 0.001 & 0.001 & 0.001 & 0.001 & 0.001 & 0.001 & 0.001 & 0.001 & 0.001 & 0.001 & 0.001 & 0.001 & 0.001 & 0.001 \\		 	                  
    \hline 
    \multirow{2}{*}{19}& \multirow{2}{*}{1000}&$\mu$  & 1.000 & 0.999 & 0.999 & 0.998 & 0.998 & 0.997 & 0.997 & 0.996 &  0.996 & 0.995 & 0.995 & 0.994 & 0.994 & 0.993 & 0.993 & 0.992 & 0.992 & 0.991 & 0.991 \\ 
                       &                      & $s$   & 0.000 & 0.000 & 0.000 & 0.001 & 0.001 & 0.001 & 0.001 & 0.001 &  0.001 & 0.001 & 0.001 & 0.001 & 0.001 & 0.001 & 0.001 & 0.001 & 0.001 & 0.001 & 0.001\\	
    \hline     
	\end{tabular}        
    }
	\caption{The mean $\mu$ and standard deviation $s$ from 1000 samples of the singular values $\{\sigma_i\}$ of uniformly sampled $n\times m$ matrices from the unit spectral norm ball $B$ using Algorithm \ref{alg:sigma}.}
 \label{tab:1}
\end{table}

\noindent \ref{alg:sigma}. The mean and standard deviation of these errors over the 1000 trials are given in Table \ref{tab:2}. The values of $e_3(2)$, $e_1(n)$, $e_2(n)$, and $2D(n)$ are also given, which were used for exponent canceling following \eqref{ec_formula}. 

Examining Table \ref{tab:1}, starting from $n=m=6$, $\mu(\hat{\sigma}_n)$ is monotonically increasing with $\mu(\hat{\sigma}_{19})=0.956$ when $n=m=19$. This is interpreted as verification that $\sigma_n\rightarrow 1$ following Theorem \ref{theorem1}. From the starting point of $n=m=19$, by further increasing $m$, $\mu(\hat{\sigma}_n)$ continues to monotonically increase to a final value of $\mu(\hat{\sigma}_n)=0.991$ when $m=1000$, verifying Theorem \ref{theorem2}. 

\begin{table}[H]
\centering
\scriptsize
\setlength{\tabcolsep}{2pt}
	\begin{tabular}{ccccccccc||ccccccccc}		
	\hline
    n & m &$e_3(2)$ & $e_1(n)$&$e_2(n)$&$2D(n)$& & ave error & max error & n & m &$e_3(2)$ & $e_1(n)$&$e_2(n)$&$2D(n)$& & ave error & max error\\    
    \hline    
	\multirow{2}{*}{1}& \multirow{2}{*}{1}& \multirow{2}{*}{-} &\multirow{2}{*}{-}&\multirow{2}{*}{-}&\multirow{2}{*}{-} & $\mu$ & 0.491 & 0.491 &   \multirow{2}{*}{13}&  \multirow{2}{*}{13}& \multirow{2}{*}{53.625} & \multirow{2}{*}{0} &\multirow{2}{*}{-1716}&\multirow{2}{*}{8,448} &$\mu$ & 0.018 & 0.060  \\  
                      &                   &                    &                  &                  & & $s$   & 0.287 & 0.287 &                      &                     &                        &                       &&& $s$  & 0.005  & 0.018    \\
    \hline		
    \multirow{2}{*}{2}&  \multirow{2}{*}{2}& \multirow{2}{*}{0} & \multirow{2}{*}{0} &\multirow{2}{*}{-2} &\multirow{2}{*}{0} & $\mu$ & 0.316 & 0.574 &   \multirow{2}{*}{14}& \multirow{2}{*}{14}& \multirow{2}{*}{36.969} & \multirow{2}{*}{0} &\multirow{2}{*}{-2366}&\multirow{2}{*}{19,968} &$\mu$ & 0.015 & 0.050 \\
                      &                    &                    &                    &                    &  & $s$   & 0.100 & 0.179 &                      &                    &                        &                     &  && $s$   & 0.004 & 0.015  \\
    \hline
    \multirow{2}{*}{3}& \multirow{2}{*}{3}&\multirow{2}{*}{4} & \multirow{2}{*}{0} &\multirow{2}{*}{-6}&\multirow{2}{*}{4} &$\mu$ & 0.246 & 0.573   &  \multirow{2}{*}{15}& \multirow{2}{*}{15}&  \multirow{2}{*}{42.656} & \multirow{2}{*}{0} &\multirow{2}{*}{-2730}&\multirow{2}{*}{23,296} &$\mu$ & 0.012 & 0.042 \\  
                      &                   &                   &                    &                   & &$s$   & 0.054 & 0.136   &                     &                    &                         &                       &&& $s$ & 0.004 & 0.012 \\
    \hline
    \multirow{2}{*}{4}& \multirow{2}{*}{4}& \multirow{2}{*}{12} & \multirow{2}{*}{0} &\multirow{2}{*}{-36}&\multirow{2}{*}{24} &$\mu$ & 0.207 & 0.552 & \multirow{2}{*}{16}& \multirow{2}{*}{16}& \multirow{2}{*}{28.125} & \multirow{2}{*}{0} &\multirow{2}{*}{-3600}&\multirow{2}{*}{53,760} &$\mu$ & 0.011 & 0.036  \\	
                      &                   &                     &                     &                   & & $s$   & 0.034 & 0.108 &                    &                    &                        &                         &&& $s$ & 0.003 & 0.011 \\
    \hline
    \multirow{2}{*}{5}&  \multirow{2}{*}{5}& \multirow{2}{*}{24} & \multirow{2}{*}{0} & \multirow{2}{*}{-60}&\multirow{2}{*}{48}  &$\mu$ & 0.191 & 0.569 & \multirow{2}{*}{17}&  \multirow{2}{*}{17}& \multirow{2}{*}{31.875} & \multirow{2}{*}{0} &\multirow{2}{*}{-4080}&\multirow{2}{*}{61,440} &$\mu$ & 0.009 & 0.030\\ 
                      &                    &                     &                     &                    & & $s$  & 0.019 & 0.092 &	                 &                     &                        &                       &&& $s$ & 0.002 & 0.009   \\	
    \hline	
    \multirow{2}{*}{6}& \multirow{2}{*}{6}& \multirow{2}{*}{37.5} & \multirow{2}{*}{0} &\multirow{2}{*}{-150}&\multirow{2}{*}{160}  &$\mu$ & 0.148 & 0.437 & \multirow{2}{*}{18}& \multirow{2}{*}{18}& \multirow{2}{*}{20.320} & \multirow{2}{*}{0} &\multirow{2}{*}{-5202}&\multirow{2}{*}{139,264} &$\mu$ & 0.007 & 0.027\\ 
                      &                   &                       &                      &                   & &$s$   & 0.025 & 0.093 &                    &                    &                        &                       &&& $s$ & 0.002 & 0.009 \\	
    \hline    
    \multirow{2}{*}{7}& \multirow{2}{*}{7}& \multirow{2}{*}{52.5} & \multirow{2}{*}{0} &\multirow{2}{*}{-210}&\multirow{2}{*}{240}  &$\mu$ & 0.116 & 0.346 &  \multirow{2}{*}{19}&  \multirow{2}{*}{19}& \multirow{2}{*}{22.711} & \multirow{2}{*}{0} &\multirow{2}{*}{-5814}&\multirow{2}{*}{156,672} &$\mu$ & 0.007 & 0.024\\
                      &                   &                       &                      &                   & & $s$  & 0.021 & 0.075 &		                 &                     &                        &                     &  && $s$  & 0.002 & 0.007\\ 	
    \hline
    \multirow{2}{*}{8}&  \multirow{2}{*}{8}& \multirow{2}{*}{49} & \multirow{2}{*}{0} &\multirow{2}{*}{-392} &\multirow{2}{*}{672}  &$\mu$ & 0.091 & 0.280 &  \multirow{2}{*}{19}& \multirow{2}{*}{50} & \multirow{2}{*}{22.590} & \multirow{2}{*}{31}  &\multirow{2}{*}{-5814}&\multirow{2}{*}{156,672} &$\mu$  & 0.001 & 0.006 \\
                      &                    &                     &                      &                    & &$s$  & 0.017 & 0.063 &                     &                     &                         &                      &&& $s$    & 0.000 & 0.002 \\	
    \hline    
    \multirow{2}{*}{9}& \multirow{2}{*}{9}& \multirow{2}{*}{63} & \multirow{2}{*}{0} &\multirow{2}{*}{-504} &\multirow{2}{*}{896} &$\mu$ & 0.073 & 0.227 &   \multirow{2}{*}{19}& \multirow{2}{*}{100} & \multirow{2}{*}{22.395} & \multirow{2}{*}{81}  &\multirow{2}{*}{-5814}&\multirow{2}{*}{156,672} &$\mu$  & 0.001 & 0.003  \\ 
                      &                   &                     &                      &                    &  & $s$  & 0.015 & 0.050 &	                   &                      &                         &                    &&& $s$       & 0.000 & 0.001 \\		     	 	    
    \hline
    \multirow{2}{*}{10}& \multirow{2}{*}{10}& \multirow{2}{*}{50.625} & \multirow{2}{*}{0} &\multirow{2}{*}{-810}&\multirow{2}{*}{2,304} &$\mu$ & 0.058 & 0.182 &   \multirow{2}{*}{19}& \multirow{2}{*}{250} & \multirow{2}{*}{21.809} & \multirow{2}{*}{231}  &\multirow{2}{*}{-5814}&\multirow{2}{*}{156,672} &$\mu$  & 0.000 & 0.002 \\ 
                       &                    &                        &                      &                    & & $s$   & 0.013 & 0.045 &                      &                      &                         &                  &   && $s$      & 0.000 & 0.000 \\              
    \hline        
     \multirow{2}{*}{11}& \multirow{2}{*}{11}& \multirow{2}{*}{61.875} & \multirow{2}{*}{0} & \multirow{2}{*}{-990}&\multirow{2}{*}{2,880} &$\mu$ & 0.035 & 0.112 &  \multirow{2}{*}{19}& \multirow{2}{*}{500} & \multirow{2}{*}{20.832} & \multirow{2}{*}{481}  &\multirow{2}{*}{-5814}&\multirow{2}{*}{156,672} &$\mu$  & 0.000 & 0.001 \\ 
                        &                    &                        &                      &                     & & $s$  & 0.015 & 0.051  &	                   &                      &                         &                  &    && $s$    & 0.000 & 0.000 \\              
    \hline
    \multirow{2}{*}{12}& \multirow{2}{*}{12}& \multirow{2}{*}{45.375} & \multirow{2}{*}{0} &\multirow{2}{*}{-1452}&\multirow{2}{*}{7,040} &$\mu$ & 0.022 & 0.072 & \multirow{2}{*}{19}& \multirow{2}{*}{1000} & \multirow{2}{*}{18.879} & \multirow{2}{*}{981}  &\multirow{2}{*}{-5814}&\multirow{2}{*}{156,672} &$\mu$  &  0.000 & 0.001 \\ 
                       &                    &                        &                       &                    & & $s$   & 0.007 & 0.023 &                    &                       &                         &                    &&& $s$      & 0.000 & 0.000 \\
    \hline
	\end{tabular}        
	\caption{For each matrix size $(n>1)\times m$, the value of $e_3(2)$ used for exponent canceling following \eqref{ec_formula} is shown given the values of $e_1(n)$ and $e_2(n)$ in Algorithm \ref{alg:sigma} and $2D(n)$ from the proof of Theorem \ref{theorem1}. The columns ave and max error contain the mean $\mu$ and standard deviation $s$ from 1000 samples of computing these errors following \eqref{mat_errors}.}
 \label{tab:2}
\end{table}
\noindent Examining Table \ref{tab:2}, beginning again from $n=m=6$, the means of ave and max error are monotonically decreasing with 
$\mu(\text{ave error})=0.007$ and $\mu(\text{max error})=0.024$ when $n=m=19$. By further increasing $m$, these mean errors continue to decrease to $0.001$ and $0.006$ when $m=50$, and finally to $0.000$ and $0.001$ when $m=1000$, demonstrating the viability in terms of accuracy of only using Algorithm \ref{alg:UV} to generate samples $\hat{\bm{\Delta}}$. The remaining columns show the use of \eqref{ec_formula} in choosing $e_3(2)$, and perhaps more interestingly, the scale of the exponents required when only sampling up to 19 singular values. The lower bound on the exponents contained in $|d_n(\sigma_n^2)|$, $2D(n)=O(n^22^{\frac{n}{2}})$, hence its value will continue to grow exponentially, demonstrating the futility of trying to sample $\{\sigma_i\}$ for larger values of $n$. In addition, $|e_1(n)+e_2(n)|=O(n^3)$, limiting the benefit of exponent canceling as $n$ increases, which can already be seen when $n=m=19$, where $e_1(n)+e_2(n)=-5,814$ is already much smaller than $2D(n)=156,672$.

As a final experiment, Algorithm \ref{alg:UV} was tested on the example matrix size of $5,120\times 25,600$ given in Section \ref{intro}, and it was verified that approximate samples of $\hat{\bm{\Delta}}=\hat{\bm{U}}\hat{\bm{V}}^T$ can be generated using numpy.linalg.qr \citep{npqr} with 64 and 32-bit floating-point arithmetic.

\section{Conclusion}
This paper studied uniform sampling from the unit spectral norm ball in high dimension. Motivated by numerical challenges of directly sampling from this distribution, it was proven that as the size of the sampled matrix increases, all singular values converge to 1 almost surely. This finding led to a much simpler approximate sampling algorithm which is scalable and shown empirically to have monotonically decreasing error as the size of the matrix increases. 

\bibliography{bibliography_paper}

\end{document}